\newtheorem{theorem}{Theorem}[section]
\newtheorem{corollary}[theorem]{Corollary}
\theoremstyle{definition}
\theoremstyle{remark}
\newtheorem{remark}[theorem]{Remark}
\numberwithin{equation}{section}
\begin{document}

\title{A new look at Bernoulli's inequality}

%    Information for first author
\author{Rui A. C. Ferreira}
%    Address of record for the research reported here
\address{Grupo F\'isica-Matem\'atica, Faculdade de Ci\^encias, Universidade de Lisboa, Av. Prof. Gama Pinto, 2, 1649-003 Lisboa, Portugal.}
%    Current address
\curraddr{Grupo F\'isica-Matem\'atica, Faculdade de Ci\^encias, Universidade de Lisboa, Av. Prof. Gama Pinto, 2, 1649-003 Lisboa, Portugal.}
\email{raferreira@fc.ul.pt}
%    \thanks will become a 1st page footnote.
\thanks{The author was supported by the ``Funda\c{c}\~ao para a Ci\^encia e a Tecnologia (FCT)" through the program ``Investigador FCT" with reference IF/01345/2014.}

%    General info
\subjclass[2000]{Primary 26D15; Secondary 26A33}

%\date{January 1, 2001 and, in revised form, June 22, 2001.}

%\dedicatory{This paper is dedicated to our advisors.}

\keywords{Bernoulli's inequality, Discrete Fractional Calculus}

\begin{abstract}
In this work, a generalization of the well known Bernoulli inequality is obtained by using the theory of discrete fractional calculus. As far as we know our approach is novel.
\end{abstract}

\maketitle

\section{Introduction}

In classical analysis the following inequality is attributed to Bernoulli: for a real number $x>-1$ and a nonnegative integer $n$, it holds:

\begin{equation}\label{Bern}
(1+x)^n \geq 1+nx.
\end{equation}
One can find in the literature several (elementary) different proofs of inequality \eqref{Bern} (see e.g. \cite{Alfaro,Nelsen}). Moreover, various generalizations were also obtained throughout the years (cf. \cite{Mit}) as well as different kinds of applications (see e.g. \cite{Klen}).

In this work we obtain an inequality that generalizes \eqref{Bern} in a completely different direction than the ones mentioned before. The reasoning is that we use the theory of discrete fractional calculus \cite{Goodrich}, in particular, the (delta) Riemann--Liouville fractional operators which were introduced by Miller and Ross in 1988 \cite{Mill0} and for which real developments happened only in the past eight years. Therefore, we actually believe our main results to be new and obtained following a novel procedure.

In order to accomplish our desires we need to further develop the theory of linear fractional difference equations, which was initially started in the work \cite{Atici} and generalized afterwards in \cite{Ferreira}. More specifically, we solve explicitly the IVP\footnote{This result alone might obviously be used by researchers in other contexts.},
\begin{align*}
(\Delta^\nu_{a+\nu-1}
x)(t)&=y(t+\nu-1)x(t+\nu-1)+z(t+\nu-1),\ t\in\{a,a+1,a+2,\ldots\},\\
x(a+\nu-1)&=x_{a+\nu-1},
\end{align*}
and, after deducing some of its important consequences, we use a recent (comparison) result of \cite{Jia} to deduce our Bernoulli-type inequality.

This paper is organized as follows: In Section 2 we provide the reader some
background on the discrete fractional calculus theory. In Section 3 we present our achievements.

\section{Preliminaries on Discrete Fractional Calculus}\label{sec0}

In this section we introduce the reader to basic concepts and
results about discrete fractional calculus (the monograph \cite{Goodrich}, particularly Chapter 2, could be useful to that matter).

Throughout this work and as usual we assume that empty sums and
products equal 0 and 1, respectively.

The power function is defined by
\begin{align*}
x^{(y)}&=\frac{\Gamma(x+1)}{\Gamma(x+1-y)},\mbox{ for}\ x,x-y\in\mathbb{R}\backslash\{\ldots,-2,-1\},\\
x^{(y)}&=0,\mbox{ for } x\notin{\mathbb{Z}^-}\mbox{ and } x-y\in{\mathbb{Z}^-}.
\end{align*}

For $a\in\mathbb{R}$ and $0<\nu\leq 1$ we define the set
$\mathbb{N}_a=\{a,a+1,a+2,\ldots\}$.
Also, we use the notation $\sigma(s)=s+1$ for the shift operator
and $(\Delta f)(t)=f(t+1)-f(t)$ to the forward difference
operator.

For a function $f:\mathbb{N}_a\rightarrow\mathbb{R}$, the
\emph{discrete fractional sum of order $\nu\geq 0$} is defined
as
\begin{align}
(\Delta^{0}_a f)(t)&=f(t),\quad t\in\mathbb{N}_a,\nonumber\\
(\Delta^{-\nu}_a f)(t)&=\frac{1}{\Gamma(\nu)}\sum_{s=a}^{t-\nu}(t-\sigma(s))^{(\nu-1)}f(s),\quad
t\in\mathbb{N}_{a+\nu-1},\ \nu>0.\label{def0}
\end{align}
\begin{remark}
Note that the operator $\Delta^{-\nu}_a$ with $\nu>0$ maps
functions defined on $\mathbb{N}_a$ to functions defined on
$\mathbb{N}_{a+\nu-1}$. Also observe that if $\nu=1$, then we
get the summation operator:
$$(\Delta^{-1}_a f)(t)=\sum_{s=a}^{t-1}f(s).$$
\end{remark}

The \emph{discrete fractional derivative of order
$\nu\in(0,1]$} is defined by
$$(\Delta^{\nu}_a f)(t)=(\Delta\Delta^{-(1-\nu)}_af)(t),\quad t\in\mathbb{N}_{a+\nu-1}.$$
\begin{remark}
Note that if $\nu=1$, then the fractional derivative is just
the forward difference operator.
\end{remark}

\section{Main Results}\label{sec1}

This section is devoted in great part to deduce our generalized Bernoulli's inequality.

In \cite{Ferreira} it was shown that, for $t\in\mathbb{N}_{a+\nu-1}$,
\begin{equation}\label{eq5}
x(t)=\frac{(t-a)^{(\nu-1)}}{\Gamma(\nu)}x_{a+\nu-1}+\frac{1}{\Gamma(\nu)}\sum_{s=a}^{t-\nu}(t-\sigma(s))^{(\nu-1)}f(s+\nu-1,x(s+\nu-1)),
\end{equation}
is the solution of the following nonlinear fractional difference
initial value problem:
\begin{align*}
(\Delta^\nu_{a+\nu-1}
x)(t)&=f(t+\nu-1,x(t+\nu-1)),\quad t\in\{a,a+1,a+2,\ldots\},\\
x(a+\nu-1)&=x_{a+\nu-1}.
\end{align*}
For our purposes we need the solution of \eqref{eq5} in the particular case when
$f(t,x)=y(t)x+z(t)$. The next result is obtained following the same procedure as for the case $f(t,x)=y(t)x$ done in \cite{Ferreira} and, therefore, we leave the details of its proof to the reader. 

\begin{theorem}\label{thm7}
Let $a\in\mathbb{R}$ and $\nu\in(0,1]$. Suppose that
$y:\mathbb{N}_{a+\nu-1}\rightarrow\mathbb{R}$ is a function. Define
an operator $T$ by
\begin{align*}
(T_y^0f)(t)&=f(t),\\
(T_y^1f)(t)&=(T_yf)(t)=\left(\Delta^{-\nu}_ay(s+\nu-1)f(s+\nu-1)\right)(t),\\
(T^{k+1}_yf)(t)&=(T_yT_y^{k})(t),\quad k\in\mathbb{N}^1,
\end{align*}
for $t\in\mathbb{N}_{a+\nu-1}$. Then, the function
\begin{equation}\label{eq0}
x(t)=\sum_{k=a}^{t-(\nu-1)}\left[\frac{x_{a+\nu-1}}{\Gamma{(\nu)}}\left(T^{k-a}_y(s-a)^{(\nu-1)}\right)(t)+\left(T^{k-a}_y\Delta^{-\nu}_a z(s+\nu-1)\right)(t)\right],\
\end{equation}
is the solution of the summation equation
$$x(t)=\frac{(t-a)^{(\nu-1)}}{\Gamma(\nu)}x_{a+\nu-1}+\left(\Delta^{-\nu}_a[y(s+\nu-1)x(s+\nu-1)+z(s+\nu-1)]\right)(t),$$
for all $t\in\mathbb{N}_{a+\nu-1}$.
\end{theorem}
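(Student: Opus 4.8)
The plan is to recognize the summation equation as an affine fixed-point equation and to solve it by the method of successive approximations (a discrete Picard/Neumann-series argument), exactly in the spirit of the homogeneous case $f(t,x)=y(t)x$ treated in \cite{Ferreira}. First I would exploit the linearity of the fractional sum operator $\Delta^{-\nu}_a$ to split the right-hand side, writing the equation as $x(t)=g(t)+(T_yx)(t)$, where the forcing term is $g(t)=\frac{(t-a)^{(\nu-1)}}{\Gamma(\nu)}x_{a+\nu-1}+\left(\Delta^{-\nu}_az(s+\nu-1)\right)(t)$ and the linear operator $T_y$ is the one defined in the statement, since by definition $(T_yx)(t)=\left(\Delta^{-\nu}_ay(s+\nu-1)x(s+\nu-1)\right)(t)$. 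This puts the problem into the standard form ``$x=g+T_yx$'', for which a Neumann series is the natural candidate.

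Next I would introduce the Picard iterates $x_0=g$ and $x_{n+1}=g+T_yx_n$ and prove by induction on $n$ that $x_n=\sum_{k=0}^{n}T_y^kg$. Using once more the linearity of $T_y$ (to pull the constant $1/\Gamma(\nu)$ through and to distribute over the two summands of $g$), each partial sum $\sum_{k=0}^{n}T_y^kg$ coincides termwise with the bracketed expression in \eqref{eq0}, once the operator power $T_y^{k-a}$ written there is identified with $T_y^{k}$ here through the shift $k\mapsto k-a$. The remaining task is then to show that these iterates stabilize, pointwise, at a genuine solution and to read off the nonstandard summation bounds of \eqref{eq0}.

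The crux of the argument is a causality (or nilpotency-on-initial-segments) lemma: for every function $f$ one has $(T_y^kf)(a+\nu-1+m)=0$ whenever $k>m$. This holds because $(\Delta^{-\nu}_ah)(a+\nu-1)=0$, the defining sum in \eqref{def0} being empty at $t=a+\nu-1$, so a single application of $T_y$ forces a zero at the left endpoint; an induction on $k$ then shows that $T_y^kf$ vanishes at the first $k$ points $a+\nu-1,a+\nu,\ldots,a+\nu-2+k$. Granting this, at a fixed $t=a+\nu-1+m$ every term with $k>m$ drops out, the Neumann series terminates, and the iterates $x_n(t)$ are constant for $n\geq m$ with common value $\sum_{k=0}^{m}(T_y^kg)(t)$. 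Since $m=t-(\nu-1)-a$, this reproduces exactly the range $k$ from $a$ to $t-(\nu-1)$ displayed in \eqref{eq0} after the shift $k\mapsto k-a$.

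To conclude I would regard the Neumann series $\sum_{k\geq 0}(T_y^kg)(t)$ as the definition of $x$; by the causality lemma it is, at each fixed $t$, a finite sum and hence well defined. Verifying $x=g+T_yx$ then reduces to applying $T_y$ term by term and re-indexing, $T_y\sum_{k\geq 0}T_y^kg=\sum_{k\geq 1}T_y^kg=x-g$, which yields the equation immediately; uniqueness, if desired, follows because the equation forces $x(a+\nu-1)=g(a+\nu-1)$ and then determines every later value recursively. I expect the genuine obstacle to be bookkeeping rather than conceptual: keeping the shifted arguments $s+\nu-1$ and the two distinct evaluation points straight through the compositions $T_y^k$, and matching the unusual summation limits of \eqref{eq0}. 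The causality lemma is precisely what collapses the otherwise infinite series to the finite sum in the theorem, and pinning down its vanishing order exactly (the first $k$ points, not $k-1$ or $k+1$) is the step most prone to off-by-one errors.
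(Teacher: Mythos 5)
Your successive-approximation (Neumann series) argument, with the key observation that $(T_y^k f)$ vanishes at the first $k$ points so the series truncates to the finite sum in \eqref{eq0}, is exactly the procedure the paper invokes: it omits the proof and refers to the method used for the homogeneous case $f(t,x)=y(t)x$ in \cite{Ferreira}, which is this same Picard-iteration scheme. Your causality lemma and the re-indexing $k\mapsto k-a$ are correct, so the proposal is sound and essentially the intended proof.
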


\begin{remark}
In Theorem \ref{thm7}, the notation 
$$\left(\Delta^{-\nu}_ay(s+\nu-1)f(s+\nu-1)\right)(t),$$
stands for $\left(\Delta^{-\nu}_a g\right)(t)$ where $g(s)=y(s+\nu-1)f(s+\nu-1)$.
\end{remark}

Let us now introduce some notation. We define a function $E:\mathbb{N}_{a+\nu-1}\times\mathbb{R}^5\rightarrow\mathbb{R}$ (it can be thought of as a discrete Mittag--Leffler function) by:

\begin{equation*}
E(t,a,\nu,\beta,c,\lambda)=\sum_{k=a}^{t-(\nu-1)}\frac{c^{k-a}}{\Gamma{((k-a)\nu+\beta)}}(t-\lambda+(k-a)(\nu-1))^{((k-a)\nu+\beta-1)},
\end{equation*}
whenever the right hand side makes sense.

\begin{corollary}\label{cor0}
If $y(t)=c$ for some $c\in\mathbb{R}$ and all
$t\in\mathbb{N}_{a+\nu-1}$ in Theorem \ref{thm7}, then the solution
given by \eqref{eq0} is
\begin{equation*}
x(t)=x_{a+\nu-1}E(t,a,\nu,\nu,c,a)
+\sum_{r=a}^{t-\nu}E(t,a,\nu,\nu,c,\sigma(r))z(r+\nu-1),\quad t\in\mathbb{N}_{a+\nu-1}.
\end{equation*}
\end{corollary}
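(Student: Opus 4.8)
The plan is to start from the general solution formula \eqref{eq0} of Theorem \ref{thm7} and specialize it to the constant case $y(t)\equiv c$. The key observation is that when $y$ is constant, the operator $T_y$ simplifies dramatically: each application of $T_y$ becomes $c$ times a fractional sum of order $\nu$, i.e. $(T_y f)(t)=c\left(\Delta^{-\nu}_a f(s+\nu-1)\right)(t)$. Therefore $(T_y^{k-a} f)(t)=c^{k-a}\left(\Delta^{-\nu}_a\right)^{k-a}$ applied appropriately to $f$, where the iterated fractional sum must be tracked carefully through the shifts $s\mapsto s+\nu-1$. The main technical engine I expect to need is a composition (semigroup) rule for the Riemann--Liouville fractional sum, namely an identity of the form $\left(\Delta^{-\mu}_a\Delta^{-\nu}_a f\right)=\left(\Delta^{-(\mu+\nu)}_a f\right)$ with the correct index shifts, which lets the $(k-a)$-fold iterate of $\Delta^{-\nu}_a$ collapse into a single fractional sum of order $(k-a)\nu$.

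First I would compute the two pieces of \eqref{eq0} separately. For the homogeneous piece, I would show by induction on $k-a$ that
\begin{equation*}
\left(T^{k-a}_y(s-a)^{(\nu-1)}\right)(t)=\frac{c^{k-a}\,\Gamma(\nu)}{\Gamma((k-a)\nu+\nu)}(t-a+(k-a)(\nu-1))^{((k-a)\nu+\nu-1)},
\end{equation*}
using the standard fractional-sum power rule $\left(\Delta^{-\nu}_a (s-a)^{(\mu)}\right)(t)=\frac{\Gamma(\mu+1)}{\Gamma(\mu+\nu+1)}(t-a)^{(\mu+\nu)}$ (adjusted for the shift in the base point introduced at each step). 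Dividing by $\Gamma(\nu)$ and summing over $k$ from $a$ to $t-(\nu-1)$ then reproduces exactly $x_{a+\nu-1}E(t,a,\nu,\nu,c,a)$, since the term with index $k$ contributes $\frac{c^{k-a}}{\Gamma((k-a)\nu+\nu)}(t-a+(k-a)(\nu-1))^{((k-a)\nu+\nu-1)}$, matching the definition of $E$ with $\beta=\nu$ and $\lambda=a$.

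Next I would treat the inhomogeneous piece $\left(T^{k-a}_y\Delta^{-\nu}_a z(s+\nu-1)\right)(t)$. Here the innermost $\Delta^{-\nu}_a$ expands as a sum over an index $r$, so I would write $\left(\Delta^{-\nu}_a z(s+\nu-1)\right)(t)=\frac{1}{\Gamma(\nu)}\sum_{r=a}^{t-\nu}(t-\sigma(r))^{(\nu-1)}z(r+\nu-1)$ and then apply the iterated operator $T^{k-a}_y$, which again reduces to $c^{k-a}$ times an order-$(k-a)\nu$ fractional sum acting on the kernel $(s-\sigma(r))^{(\nu-1)}$ in the variable $s$. Using the power rule once more turns each such term into $\frac{c^{k-a}}{\Gamma((k-a)\nu+\nu)}(t-\sigma(r)+(k-a)(\nu-1))^{((k-a)\nu+\nu-1)}$, and after interchanging the order of the $k$-sum and the $r$-sum I recognize the inner $k$-sum as $E(t,a,\nu,\nu,c,\sigma(r))$, leaving precisely $\sum_{r=a}^{t-\nu}E(t,a,\nu,\nu,c,\sigma(r))z(r+\nu-1)$.

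The hard part will be bookkeeping the shifts of the base point: each application of $T_y$ involves the composition $s\mapsto s+\nu-1$ together with a fractional sum $\Delta^{-\nu}_a$, so establishing the clean composition law for the iterate — and in particular verifying that the accumulated shift after $k-a$ applications is exactly the $(k-a)(\nu-1)$ that appears in the argument of the power function inside $E$ — requires a careful induction rather than a one-line appeal to the semigroup property. I expect this index-tracking, together with justifying the interchange of summation in the inhomogeneous term and checking the summation limits agree with the upper limit $t-(\nu-1)$ on $k$, to be the only genuine obstacle; the rest is a direct substitution into the definition of $E$.
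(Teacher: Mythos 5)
Your proposal is correct and follows exactly the route the paper intends: the paper states this corollary without proof, deferring implicitly to the computation for the homogeneous constant-coefficient case in \cite{Ferreira}, and your induction formula $\left(T^{k-a}_y(s-a)^{(\nu-1)}\right)(t)=\frac{c^{k-a}\Gamma(\nu)}{\Gamma((k-a)\nu+\nu)}(t-a+(k-a)(\nu-1))^{((k-a)\nu+\nu-1)}$ is precisely the right intermediate identity (it checks out at $k-a=0,1$ and the accumulated shift $(k-a)(\nu-1)$ is exactly what the base-point adjustment in the power rule produces). The only caution is the one you already flag: the power rule as you quote it literally holds with base point $a+\mu$ rather than $a$, so the ``adjustment'' you mention is essential rather than cosmetic, but with that in place the argument goes through.
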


It is pertinent to formulate the following consequence of Corollary \ref{cor0}.

\begin{corollary}
Suppose that the function $z$ is a constant equal to $K$ in Corollary \ref{cor0}. Then,
\begin{equation*}
x(t)=x_{a+\nu-1}E(t,a,\nu,\nu,c,a)
+KE(t,a,\nu,\nu+1,c,a),\quad t\in\mathbb{N}_{a+\nu-1}.
\end{equation*}
\end{corollary}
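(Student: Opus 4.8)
The plan is to start from the expression furnished by Corollary \ref{cor0} and to substitute the constant forcing directly. Setting $z(r+\nu-1)=K$ for every $r$ leaves the first summand untouched, while the second becomes $K\sum_{r=a}^{t-\nu}E(t,a,\nu,\nu,c,\sigma(r))$. Thus the entire statement collapses to a single identity, on which I would concentrate all the effort:
\begin{equation*}
\sum_{r=a}^{t-\nu}E(t,a,\nu,\nu,c,\sigma(r))=E(t,a,\nu,\nu+1,c,a),\quad t\in\mathbb{N}_{a+\nu-1}.
\end{equation*}
Everything else is bookkeeping.

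To prove this I would unfold the definition of $E$ on the left and interchange the two finite summations. Since the inner index $k$ runs from $a$ to $t-(\nu-1)$ independently of $r$, while $r$ runs from $a$ to $t-\nu$ independently of $k$, the interchange is over a genuine rectangle and causes no difficulty. Writing $\sigma(r)=r+1$ and abbreviating $n=k-a+1$, the problem reduces to evaluating, for each fixed $k$, the inner sum
\begin{equation*}
S_k=\sum_{r=a}^{t-\nu}\bigl(t-r-1+(k-a)(\nu-1)\bigr)^{(n\nu-1)}.
\end{equation*}

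The main tool here is the discrete power rule $\Delta_s\frac{s^{(p+1)}}{p+1}=s^{(p)}$, which telescopes the sum. Substituting $u=t-r-1+(k-a)(\nu-1)$ (so that $u$ advances in unit steps, with lower value $n(\nu-1)$), the fundamental theorem of the difference calculus yields
\begin{equation*}
S_k=\frac{1}{n\nu}\Bigl[\bigl(t-a+(k-a)(\nu-1)\bigr)^{(n\nu)}-\bigl(n(\nu-1)\bigr)^{(n\nu)}\Bigr].
\end{equation*}
Multiplying by the prefactor $c^{k-a}/\Gamma(n\nu)$ coming from the definition of $E$ and using $n\nu\,\Gamma(n\nu)=\Gamma(n\nu+1)$, the first boundary term reproduces \emph{exactly} the $k$-th summand of $E(t,a,\nu,\nu+1,c,a)$; summing over $k$ then reconstructs the right-hand side.

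I expect the principal obstacle to be the lower boundary term $\bigl(n(\nu-1)\bigr)^{(n\nu)}$, for the clean two-term answer hinges on its vanishing. Writing it as $\Gamma(n\nu-n+1)/\Gamma(1-n)$, for every positive integer $n$ the denominator is evaluated at a non-positive integer, where the reciprocal Gamma is zero, so the term drops out; this is precisely the vanishing convention encoded in the second line of the definition of the power function. The only delicate point is the degenerate rational values of $\nu$ for which the numerator $\Gamma(n\nu-n+1)$ is simultaneously singular, which I would treat by the same limiting convention (and which never arises when $\nu=1$, where the identity reduces to the classical geometric-series solution of $x(t+1)=(1+c)x(t)+K$ and so provides a reassuring sanity check).
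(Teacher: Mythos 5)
Your proposal is correct and follows essentially the same route as the paper: reduce to the identity $\sum_{r=a}^{t-\nu}E(t,a,\nu,\nu,c,\sigma(r))=E(t,a,\nu,\nu+1,c,a)$, interchange the two finite sums, telescope the inner sum via the discrete power rule, and discard the lower boundary term using the vanishing convention for the falling power. Your explicit attention to that boundary term $\bigl(n(\nu-1)\bigr)^{(n\nu)}$ is in fact more careful than the paper, which drops it silently.
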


\begin{proof}
Let us first note that (cf. \cite[Theorem 1.8]{Goodrich}): $$\Delta_t(s-t)^{(r)}=-r(s-\sigma(t))^{(r-1)},\ s, r\in\mathbb{R}.$$
Moreover,
$$\sum_{k=a}^{b-1}\Delta f(k)=f(b)-f(a).$$
Therefore,
\begin{align*}
    &\sum_{r=a}^{t-\nu}E(t,a,\nu,\nu,c,\sigma(r))z(r+\nu-1)\\
    &=K\sum_{r=a}^{t-\nu}\sum_{k=a}^{t-(\nu-1)}\frac{c^{k-a}}{\Gamma{((k-a)\nu+\nu)}}(t-\sigma(r)+(k-a)(\nu-1))^{((k-a)\nu+\nu-1)}\\
    &=K\sum_{k=a}^{t-(\nu-1)}\frac{c^{k-a}}{\Gamma{((k-a)\nu+\nu)}}\sum_{r=a}^{t-(\nu-1)-1}(t-\sigma(r)+(k-a)(\nu-1))^{((k-a)\nu+\nu-1)}\\
    &=K\sum_{k=a}^{t-(\nu-1)}\frac{c^{k-a}}{\Gamma{((k-a)\nu+\nu)}((k-a)\nu+\nu)}\\
    &\hspace{1cm}\cdot\sum_{r=a}^{t-(\nu-1)-1}-((k-a)\nu+\nu)(t-\sigma(r)+(k-a)(\nu-1))^{((k-a)\nu+\nu-1)}\\
    &=K\sum_{k=a}^{t-(\nu-1)}\frac{c^{k-a}}{\Gamma{((k-a)\nu+\nu+1)}}[-(t-r+(k-a)(\nu-1))^{((k-a)\nu+\nu)}]_{r=a}^{r=t-(\nu-1)}\\
     &=K\sum_{k=a}^{t-(\nu-1)}\frac{c^{k-a}}{\Gamma{((k-a)\nu+\nu+1)}}(t-a+(k-a)(\nu-1))^{((k-a)\nu+\nu)}\\
     &=KE(t,a,\nu,\nu+1,c,a),
\end{align*}
and the proof is done.
\end{proof}
We now need to address the question of the sign of the function $E(t,a,\nu,\nu,c,a)$. First we note that it is the solution of the fractional IVP:
\begin{align*}
(\Delta^\nu_{a+\nu-1}
x)(t)&=cx(t+\nu-1),\quad t\in\{a,a+1,a+2,\ldots\},\ 0<\nu\leq 1,\\
x(a+\nu-1)&=1.
\end{align*}
Let us now recall a recent result proved by Jia \emph{et.al}:

\begin{theorem}\cite[Theorem 4.2.]{Jia}
Assume $c_1(t)\geq c_2(t)\geq -\nu$, $0<\nu<1$ and $x(t)$, $y(t)$ are solutions of the equations
$$(\Delta^\nu_{a+\nu-1}
x)(t)=c_1(t)x(t+\nu-1),\quad  t\in\{a,a+1,a+2,\ldots\}$$
and
$$(\Delta^\nu_{a+\nu-1}
y)(t)=c_2(t)y(t+\nu-1),\quad  t\in\{a,a+1,a+2,\ldots\},$$
respectively, satisfying $x(a+\nu-1)\geq y(a+\nu-1)>0$. Then,
$$x(t)\geq y(t),\quad t\in\mathbb{N}_{a+\nu-1}.$$
\end{theorem}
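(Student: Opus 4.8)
The plan is to deduce the comparison statement from a single \emph{positivity-preservation principle} for the forced equation, applied twice. Write $t_j=a+\nu-1+j$ for the points of $\mathbb{N}_{a+\nu-1}$ and use the summation-equation form behind \eqref{eq5}: a solution of $(\Delta^\nu_{a+\nu-1}u)(t)=c(t)u(t+\nu-1)+h(t)$ with $u(a+\nu-1)=u_0$ satisfies
\begin{equation*}
u(t)=\frac{(t-a)^{(\nu-1)}}{\Gamma(\nu)}u_0+\frac{1}{\Gamma(\nu)}\sum_{s=a}^{t-\nu}(t-\sigma(s))^{(\nu-1)}\left[c(s)u(s+\nu-1)+h(s)\right].
\end{equation*}
The Lemma I would aim for is: if $c(t)\geq-\nu$, $h(t)\geq0$ and $u_0\geq0$, then $u(t)\geq0$ on $\mathbb{N}_{a+\nu-1}$.

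Granting the Lemma, the theorem follows quickly. First apply it to the homogeneous equation for $x$ (with $h\equiv0$, $c=c_1\geq-\nu$, $x_0>0$) to get $x(t)\geq0$ everywhere. Next observe that $w=x-y$ solves $(\Delta^\nu_{a+\nu-1}w)(t)=c_2(t)w(t+\nu-1)+h(t)$ with $h(t)=(c_1(t)-c_2(t))x(t+\nu-1)$; since $c_1\geq c_2$ and $x\geq0$ we have $h\geq0$, while $c_2\geq-\nu$ and $w(a+\nu-1)=x_0-y_0\geq0$. A second application of the Lemma gives $w\geq0$, i.e.\ $x(t)\geq y(t)$, which is exactly the assertion.

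For the Lemma I would argue by strong induction on $j$. The two structural facts to record first are that the falling-factorial kernels are strictly positive on the relevant ranges, namely $(t_j-a)^{(\nu-1)}=\Gamma(\nu+j)/\Gamma(j+1)>0$ and $(t_j-\sigma(s))^{(\nu-1)}>0$ for $a\leq s\leq t_j-\nu$ when $0<\nu<1$, and that the ``diagonal'' summand $s=t_j-\nu$ has kernel exactly $(\nu-1)^{(\nu-1)}=\Gamma(\nu)$, so it contributes precisely $c(t_j-\nu)u(t_{j-1})+h(t_j-\nu)$. The base case is then transparent, since
\begin{equation*}
u(t_1)=(\nu+c(a))u_0+h(a)\geq0
\end{equation*}
because $\nu+c(a)\geq0$. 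Writing $u(t_j)=B_j u_0+\sum_{i=0}^{j-1}K_{j,i}\,c(a+i)\,u(t_i)+(\text{nonnegative forcing})$ with $B_j,K_{j,i}>0$, the induction reduces to showing that the \emph{effective} coefficient of each nonnegative input ($u_0$ and each $h(a+i)$), obtained after substituting the inductively known expressions for $u(t_0),\dots,u(t_{j-1})$, is nonnegative.

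The hard part, and the reason the hypothesis $c\geq-\nu$ is indispensable, is precisely this inductive step: the coefficient $c(s)$ multiplying $u(s+\nu-1)$ is \emph{not} sign-definite, so a naive term-by-term estimate from $u(s+\nu-1)\geq0$ fails, and the diagonal contribution $c(t_j-\nu)u(t_{j-1})$ may be negative. The route I would take is to note that each effective coefficient is \emph{multilinear} in $c(a),\dots,c(a+j-1)$, so by the vertex principle its infimum over the box $c(\cdot)\in[-\nu,\infty)$ is attained at the extremal configuration $c\equiv-\nu$; in that case the solution is, by Corollary \ref{cor0}, the explicit discrete Mittag--Leffler function $u_0\,E(t,a,\nu,\nu,-\nu,a)$ plus a nonnegative forced part. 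I expect two genuine obstacles here: justifying that the binding vertex is indeed $c\equiv-\nu$ (this needs a downward induction on the coordinates to keep the relevant slopes nonnegative, the $j=1,2$ computations being the easy instances), and then establishing the nonnegativity of the \emph{alternating} series $E(t,a,\nu,\nu,-\nu,a)$ directly. An alternative that sidesteps the multilinear bookkeeping would be a discrete maximum-principle argument applied to $X=\Delta^{-(1-\nu)}_{a+\nu-1}u$, exploiting that its forward difference satisfies $(\Delta X)(t)=c(t)u(t+\nu-1)+h(t)$.
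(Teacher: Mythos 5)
Your reduction of the comparison statement to a positivity lemma for the forced equation (via $w=x-y$, which satisfies the $c_2$--equation with forcing $h=(c_1-c_2)x\geq 0$ once $x\geq 0$ is known) is sound and is indeed the standard skeleton of such arguments. The gap is that the positivity lemma itself --- the entire content of the theorem --- is not proved. Your proposed route defers the difficulty to two claims you explicitly leave open: that the infimum of each effective coefficient over the box $c(\cdot)\in[-\nu,\infty)^j$ is attained at the vertex $c\equiv-\nu$ (the vertex principle for multilinear functions requires a compact box, and verifying that the relevant slopes are nonnegative is essentially the same induction you are trying to run), and that the alternating series $E(t,a,\nu,\nu,-\nu,a)$ is nonnegative. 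The second claim is precisely the extremal case of the theorem ($c_1=c_2=-\nu$, $y\equiv$ the Mittag--Leffler solution), so the argument as sketched reduces the theorem to its hardest instance without resolving it. Note also that the paper itself establishes $E(t,a,\nu,\nu,c,a)\geq 0$ for $c\geq-\nu$ \emph{as a consequence} of this theorem, so no independent proof of that positivity is available to you here.

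The difficulty dissolves if you work with the other representation of the operator, namely \eqref{op}, which is what the cited proof in \cite{Jia} does (the paper points to this right after the statement). Writing
\begin{equation*}
(\Delta^\nu_{a+\nu-1} x)(t)=\frac{1}{\Gamma(-\nu)}\sum_{k=a+\nu-1}^{t+\nu}(t-\sigma(k))^{(-\nu-1)}x(k)
\end{equation*}
and solving for the newest value, one finds that the coefficient of $x(t+\nu)$ is $1$, the coefficient of $x(t+\nu-1)$ (after moving the right-hand side over) is exactly $\nu+c(t)\geq 0$, and every older value $x(k)$ with $k\leq t+\nu-2$ carries the coefficient $-(t-\sigma(k))^{(-\nu-1)}/\Gamma(-\nu)$, which is strictly positive because $(t-\sigma(k))^{(-\nu-1)}>0$ there and $\Gamma(-\nu)<0$ for $0<\nu<1$. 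Positivity of $x$, and then the comparison $x\geq y$, follow by a one-step induction with no sign ambiguity and no need for the multilinear vertex analysis. In your chosen summation-equation form the ``bad'' sign of $c(s)$ is unavoidable, which is why your induction stalls; switching representations is the missing idea.
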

A closer look to the proof of the previous theorem permits us to conclude immediately that $y(a+\nu-1)$ might be equal to zero and the result still remains. Moreover, the representation for the Riemann--Liouville fractional difference,
\begin{equation}\label{op}
(\Delta^\nu_a f)(t)=\frac{1}{\Gamma(-\nu)}\sum_{k=a}^{t+\nu}(t-\sigma(k))^{(-\nu-1)}f(k),\ t\in\mathbb{N}_{a-\nu+1},
\end{equation}
is used to prove the result and, hence, the restriction $\nu\neq 1$. Nevertheless, M. Holm showed in \cite[Theorem 2.2.]{Holm} the continuity of the fractional difference operator \eqref{op} with respect to $\nu$ and, therefore, one may also consider $\nu=1$. In conclusion, for a real number $a$, $0<\nu\leq 1$ and $c\geq -\nu$:

\begin{equation}\label{pos}
E(t,a,\nu,\nu,c,a)\geq 0,\quad t\in \mathbb{N}_{a+\nu-1}.
\end{equation}

We are now ready to prove our main result:

\begin{theorem}(Generalized Bernoulli inequality)
Let $\nu\in(0,1]$, $c\in[-\nu,\infty)$ and $a\in\mathbb{R}$. Then, the following inequality holds:
\begin{equation}\label{GBer}
 cE(t,a,\nu,\nu+1,c,a)\geq c\frac{(t-a)^{(\nu)}}{\Gamma(\nu+1)},\quad t\in\mathbb{N}_{a+\nu-1}.
\end{equation}
\end{theorem}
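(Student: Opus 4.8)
The plan is to realise both sides of \eqref{GBer} as solutions of fractional initial value problems and then to compare them. By the corollary with constant forcing proved just above (specialised to $K=c$ and initial value $x_{a+\nu-1}=0$), the left--hand side $x(t):=cE(t,a,\nu,\nu+1,c,a)$ is precisely the solution of
\begin{equation*}
(\Delta^\nu_{a+\nu-1}x)(t)=cx(t+\nu-1)+c,\qquad x(a+\nu-1)=0 .
\end{equation*}
Taking instead the coefficient to be $0$ (still with forcing $c$) and using $E(t,a,\nu,\nu+1,0,a)=(t-a)^{(\nu)}/\Gamma(\nu+1)$, the right--hand side $y(t):=c(t-a)^{(\nu)}/\Gamma(\nu+1)$ is the solution of
\begin{equation*}
(\Delta^\nu_{a+\nu-1}y)(t)=c,\qquad y(a+\nu-1)=0 .
\end{equation*}
Thus \eqref{GBer} is exactly the assertion $x(t)\geq y(t)$ for $t\in\mathbb{N}_{a+\nu-1}$.

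Next I would set $d=x-y$ and identify the problem it solves. Subtracting the two equations and writing $x=d+y$ gives $(\Delta^\nu_{a+\nu-1}d)(t)=cx(t+\nu-1)=cd(t+\nu-1)+cy(t+\nu-1)$ together with $d(a+\nu-1)=0$; that is, $d$ satisfies the \emph{same} linear equation with constant coefficient $c$, but now with the nonhomogeneous term $cy(t+\nu-1)=c^{2}(t+\nu-1-a)^{(\nu)}/\Gamma(\nu+1)$. The decisive observation is that this forcing is a \emph{square} times a power function, hence nonnegative for \emph{every} real $c$ and not merely for $c\geq 0$; this is what will make the inequality survive for $-\nu\leq c<0$. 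Applying the variation--of--constants formula of Corollary \ref{cor0} with $x_{a+\nu-1}=0$ and $z(s)=c^{2}(s-a)^{(\nu)}/\Gamma(\nu+1)$ then yields the closed form
\begin{equation*}
d(t)=\frac{c^{2}}{\Gamma(\nu+1)}\sum_{r=a}^{t-\nu}E(t,a,\nu,\nu,c,\sigma(r))\,(r+\nu-1-a)^{(\nu)} .
\end{equation*}

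I would now check nonnegativity of every factor. One has $c^{2}\geq 0$, $\Gamma(\nu+1)>0$, and $(r+\nu-1-a)^{(\nu)}=\Gamma(r-a+\nu)/\Gamma(r-a)\geq 0$ for each integer $r-a\geq 0$ (the value being $0$ when $r=a$). Hence the whole sum is nonnegative as soon as each kernel $E(t,a,\nu,\nu,c,\sigma(r))$ is nonnegative, and $d\geq 0$ is exactly \eqref{GBer}. The main obstacle is precisely this last point: the positivity statement \eqref{pos} is recorded only for the value $\lambda=a$ of the sixth argument, whereas the kernel above has $\lambda=\sigma(r)=r+1>a$. I would close this gap with the identity
\begin{equation*}
E(t,a,\nu,\nu,c,\sigma(r))=E(t,r+1,\nu,\nu,c,r+1),\qquad a\leq r\leq t-\nu,
\end{equation*}
after which \eqref{pos}, applied with the real base point $r+1$ in place of $a$ (legitimate since $t\geq r+\nu$) and using $c\geq-\nu$, gives $E(t,a,\nu,\nu,c,\sigma(r))\geq 0$. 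Note that it is here, and only here, that the hypothesis $c\geq-\nu$ is consumed.

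To prove the identity one simply compares the two defining sums. In $E(t,a,\nu,\nu,c,\sigma(r))$ the summand indexed by $i:=k-a$ is $c^{i}(t-r-1+i(\nu-1))^{(i\nu+\nu-1)}/\Gamma(i\nu+\nu)$, and for the power function occurring there the exponent satisfies $x-y=t-r-i-\nu$, which is a negative integer as soon as $i>t-r-\nu$; since the base $x$ is then not a negative integer (it is a nontrivial affine function of $\nu$), the second branch of the definition of $x^{(y)}$ forces those terms to vanish. The surviving terms $i=0,\dots,t-r-\nu$ coincide term by term with the sum defining $E(t,r+1,\nu,\nu,c,r+1)$, which establishes the identity. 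This truncation is the only delicate computation; everything else is bookkeeping, and it reproduces the classical statement, for when $\nu=1$ the kernel collapses to $(1+c)^{\,t-r-1}$ and the truncation is nothing but the vanishing of the corresponding binomial coefficients.
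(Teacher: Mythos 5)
Your proposal is correct and follows essentially the same route as the paper: the difference $d=x-y$ you form satisfies exactly the representation the paper derives (your forcing $cy(t+\nu-1)=c^{2}(t+\nu-1-a)^{(\nu)}/\Gamma(\nu+1)$ is precisely the paper's nonnegative correction term $m$), and both arguments then reduce to the truncation of the kernel sum plus the positivity statement \eqref{pos} applied to a translated base point. The only difference is cosmetic — you subtract the two solutions before invoking Corollary \ref{cor0}, whereas the paper perturbs the equation satisfied by the right-hand side — so the two proofs are term-for-term identical in substance.
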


\begin{proof}
Let $c\geq -\nu$ and $x:\mathbb{N}_{a+\nu-1}\rightarrow\mathbb{R}$ be the function defined by:
$$x(t)=c\frac{(t-a)^{(\nu)}}{\Gamma(\nu+1)}.$$
Then $x(a+\nu-1)=0$ and $(\Delta^\nu_{a+\nu-1} x)(t)=(\Delta^\nu_{a+\nu} x)(t)=c$ by \cite[Theorem 2.40]{Goodrich}. Therefore,
$$cx(t+\nu-1)+c=c^2\frac{(t+\nu-1-a)^{(\nu)}}{\Gamma(\nu+1)}+c\geq (\Delta^\nu_{a+\nu-1} x)(t).$$
Define the function $m$ by:
$$m(t+\nu-1)=cx(t+\nu-1)+c-(\Delta^\nu_{a+\nu-1} x)(t),$$
which is nonnegative. By Corollary \ref{cor0} we get (note that $x_a=0$)
$$x(t)=\sum_{r=a}^{t-\nu}E(t,a,\nu,\nu,c,\sigma(r))(c-m(r+\nu-1)).$$
Hence,

\begin{align*}
 x(t)&=cE(t,a,\nu,\nu+1,c,a)-\sum_{r=a}^{t-\nu}m(r+\nu-1)\\
    &\hspace{.5cm}\cdot\sum_{k=a}^{t-(\nu-1)}\frac{c^{k-a}}{\Gamma{((k-a)\nu+\nu)}}(t-\sigma(r)+(k-a)(\nu-1))^{((k-a)\nu+\nu-1)}\\
     &=cE(t,a,\nu,\nu+1,c,a)-\sum_{r=a}^{t-\nu}m(r+\nu-1)\\
     &\hspace{.5cm}\cdot\sum_{k=a}^{t-\sigma(r)+a-(\nu-1)}\frac{c^{k-a}}{\Gamma{((k-a)\nu+\nu)}}(t-\sigma(r)+(k-a)(\nu-1))^{((k-a)\nu+\nu-1)}\\
    &=cE(t,a,\nu,\nu+1,c,a)-\sum_{r=0}^{t-a-\nu}m(r+a+\nu-1)\\
    &\hspace{.5cm}\cdot\sum_{k=a}^{t-\sigma(r)-(\nu-1)}\frac{c^{k-a}}{\Gamma{((k-a)\nu+\nu)}}(t-\sigma(r)-a+(k-a)(\nu-1))^{((k-a)\nu+\nu-1)}\\
   &=cE(t,a,\nu,\nu+1,c,a)-\sum_{r=0}^{t-a-\nu}m(r+a+\nu-1)E(t-\sigma(r),a,\nu,\nu,c,a).
   \end{align*}

Finally, by \eqref{pos} we conclude that
$$x(t)\leq cE(t,a,\nu,\nu+1,c,a),$$
which is equivalent to
$$c\frac{(t-a)^{(\nu)}}{\Gamma(\nu+1)}\leq cE(t,a,\nu,\nu+1,c,a).$$
The proof is done.
\end{proof}

We finish this work showing that inequality \eqref{GBer} truly generalizes the Bernoulli inequality, i.e. when we let $\nu=1$ (and $a=0$) in \eqref{GBer}, then we get \eqref{Bern}:
\begin{align*}
    & cE(t,0,1,2,c,0)\geq c\frac{t^{(1)}}{\Gamma(2)}\\
    & \Leftrightarrow \sum_{k=0}^t\frac{c^{k+1}}{\Gamma(k+2)}t^{(k+1)}\geq ct\\
    & \Leftrightarrow -1+\sum_{k=0}^t\frac{c^{k}}{\Gamma(k+1)}t^{(k)}\geq ct\\
    & \Leftrightarrow (1+c)^t\geq 1+ct,\quad t\in\mathbb{N}_0,
\end{align*}
where the last equivalency follows from \cite[Remark 3.7.]{Ferreira}.

\section*{Acknowledgements}
The author would like to thank the referees for their careful reading of the manuscript and their suggestions that undoubtedly contributed to its final draft.

\bibliographystyle{amsplain}

\end{document}